\documentclass[11pt,reqno]{amsart}
%%%%%%%%%%%%%%%%%%%%%%%%%%%%%%%%%%%%%%%%%%%%%%%%%%%%%%%%%%%%%%%%%%%%%%%%%%%
\usepackage{amssymb}
\usepackage{amsmath}
\usepackage[active]{srcltx}
\usepackage{t1enc}
\usepackage[latin2]{inputenc}
\usepackage{verbatim}
\usepackage{amsmath,amsfonts,amssymb,amsthm}
\usepackage[mathcal]{eucal}
\usepackage{enumerate}
\usepackage[centertags]{amsmath}
\usepackage{graphics}
\usepackage{xcolor}

\newtheorem*{thm}{Theorem A } % the main one
\newtheorem*{MenshovRademacher}{Theorem B}
\newtheorem*{remark}{Remark}

\setcounter{MaxMatrixCols}{10}
\newtheorem{theorem}{Theorem}

\newtheorem{lemma}{Lemma}

\newtheorem{corollary}{Corollary}

\begin{document}
	\author{L-E. Persson, V. Tsagareishvili and G. Tutberidze}
	\title[Properties of general Fourier series]{Properties of sequence of linear functionals on $BV$ with applications}
	\address{L-E. Persson, UiT-The Arctic University of Norway, P.O. Box 385, N-8505, Narvik, Norway and Department of Mathematics and Computer Science, Karlstad University, 65188 Karlstad, Sweden}
	\email {lars.e.persson@uit.no \and larserik6pers@gmail.com}  
	\address{ V. Tsagareishvili, Department of Mathematics, Faculty of Exact and Natural Sciences, Ivane Javakhishvili Tbilisi State University, Chavchavadze str. 1, Tbilisi 0128, Georgia }
	\email{cagare@ymail.com}
	\address{G.Tutberidze, The University of Georgia, School of science and technology, 77a Merab Kostava St, Tbilisi 0128, Georgia}
	\email{g.tutberidze@ug.edu.ge \and giorgi.tutberidze1991@gmail.com}
	
	\thanks{}
	\date{}
	\maketitle

	\begin{abstract}
		This paper is devoted to investigating the sequence of some linear functionals in the space $BV$ of finite variation functions. We prove that under certain conditions this sequence is bounded. We also prove that this result is sharp. In particular, the obtained results can be used to study convergence of some general Fourier series. Moreover, the obtained conditions seem to be new and useful also for classical orthonormal systems.
	\end{abstract}
	
	\textbf{2020 Mathematics Subject Classification.} 42C10, 46B07
	
	\textbf{Key words and phrases:} Sequence of linear functionals, Banach spaces, Fourier coefficients, Fourier series, Orthonormal series.

\section{INTRODUCTION}
In order not to disturb the discussion in this introduction and the proofs of our main result we have collected all notations, definitions and other preliminaries in Section 2.

In this paper we prove a new convergence result for a special sequence of linear functionals $\{U_n\}=\{U_n(f)\},$ defined by \eqref{equation2}-\eqref{A1} and where usually $f \in BV$ on $(0,1).$ See Theorem \ref{theorem3.1}. We also prove that this result is, in a sense, sharp. See Theorem \ref{theorem3.2}.

The study of functionals has a rich history and many powerful and interesting results are obtained, see e.g. the monographs \cite{AkhiezerGlazman, Banach, DunfordScshvartz, Edvards, Kantorovich, KolmogorovFomin, Yosida} and the references therein. And this interest seems only to increase and one reason is absolutely the fact that such developments are powerful for various applications.

For our investigation it is important to remind about the fact  that from Banach's Theorem (see e.g.\cite{Banach}) it follows that if  $f \in L_2 (0,1),\ \left(f\nsim 0\right),$ then there exists an ONS such that  the  Fourier series of  this function $f$ is not convergent on $[0,1]$ with respect to this system. Thus, it is clear that the Fourier coefficients of functions of bounded variation do not in general satisfy  condition in Theorem B (the Menchov-Rademacher Theorem). 

Another motivation for this paper is to use our main result to obtain some new results concerning convergence/divergence of general Fourier series. Some other results for this case can be found in \cite{GogoladzeTsagareishvili1, GogoladzeTsagareishvili2, GogoladzeTsagareishvili3, GogoladzeTsagareishvili, Tsagareishvili1, Mclaughlin, PTT, PTW, PTW1, PSTW, tep1, tep2, tep4, Tsagareishvili2, Tsagareishvili3, Tsagareishvili4, Tsagareishvili0, tsatut1, tsatut2,tsatut3}. See also the monograph \cite{KashinSaakyan}.

The main results Theorems \ref{theorem3.1} and \ref{theorem3.2} are presented and proved in Section 3. The new applications concerning convergence of general Fourier series can be found Section 4. See Theorem 3, Corollary 1, Theorem 4 and Theorem 5.

\section{PRELIMINARIES}
Let $\{\varphi_n\}$ be an orthonormal system (ONS) on $\left[0,1\right].$

We denote by $BV$ the class of all functions of bounded variation on $(0,1)$ and write $V \left(f\right)$ for the total variation of a function $f$ on $\left[0,1\right]$.

By $A$ we denote the Banach space of absolutely convergent functions with the norm  ${{\left\| f \right\|}_{A}}$ defined by

\begin{equation} \label{*}
	{{\left\| f \right\|}_{A}}\,:=\,{{\left\| f \right\|}_{C}}\,+\,\int\limits_{0}^{1}{\left| \frac{df}{dx} \right|dx.}
\end{equation}
We will investigate the linear functionals $\{U_n(f)\}$ defined by
\begin{equation}\label{equation2}
	{U}_{n}(f):=\int_{0}^{1} f(x) Q_n(d,a,x) dx,
\end{equation}
where $f\in L_2,$ $a=\{a_n\} \in l_2$ is an arbitrary sequence of numbers and

\begin{equation}\label{equation3}
	{{Q}_{n}}(d,a,x):=\sum\limits_{k=1}^{n}{{{d}_{k}}{{a}_{k}}{\log k}{{\varphi }_{k}}(x).}
\end{equation}
Here $\{d_n\}$ denote a sequence of real number such that
\begin{eqnarray}
	\label{A1} d_n=O\left(\frac{\sqrt{n}}{\log^2 (n+1)}\right).
\end{eqnarray}

For this investigation of the functionals $\{{U}_{n}(f)\}$ we need the following important Lemma (see \cite{GogoladzeTsagareishvili}).

\begin{lemma}\label{lemma1.1} If $f \in L_2 \left(0,1\right)$ takes only finite values on $\left[0,1\right]$ and $h \in L_2 \left(0,1\right)$  is an arbitrary function, then
	
	\begin{eqnarray} \label{1.2}
		\int_{0}^{1} f\left(x\right) h \left(x\right) dx 
		&=& \sum_{i=1}^{n-1}\left(f\left(\frac{i}{n}\right)-f\left(\frac{i+1}{n}\right)\right)\int_{0}^{i/n}h\left(x\right)dx \\
		&+& \sum_{i=1}^{n}\int_{\left(i-1\right)/n}^{i/n}\left(f\left(x\right) -f\left(\frac{i}{n}\right)\right)h\left(x\right)dx   \notag \\
		&+&f\left(1\right)\int_{0}^{1}h\left(x\right)dx.     \notag
	\end{eqnarray}
\end{lemma}

We denote 
\begin{eqnarray} \label{1.4}
	B_n\left(d,a\right)=\max_{1\leq i < n} \left\vert \int_{0}^{i/n} Q_n\left(d,a,x\right)dx\right\vert.
\end{eqnarray}

We say that the sequence of functionals  $\{U_n(f)\}$ is bounded on the space $V,$ if, for any $\{a_n\} \in l_2,$ 
$$\underset{n\to \infty }{\mathop{\limsup}} \left| {{U}_{n}}(f) \right|<+\infty. $$

We also need the following result of S.Banach (see e.g. \cite{Banach}):

\begin{thm} 
	Let $f\in L_2$ be an arbytrary (non-zero) function. Then there exists an ONS $\{{\varphi}_{n}\} $ such that
	$$\limsup_{n \rightarrow \infty}\left|\sum_{k=1}^n C_k(f) \varphi_k(x)\right|=+\infty \ \ \text{a.e. on} \ \ [0,1],$$
	where $C_k(f)$ are the Fourier coefficients of the function $f \in L_2$ with respect to the system $\{{\varphi }_{k}\} $ and	defined as follows  
	\begin{equation} \label{fourier}
		{{C}_{k}}(f):=\int\limits_{0}^{1}{f(x){{\varphi }_{k}}(x)dx.}
	\end{equation}
	\end{thm}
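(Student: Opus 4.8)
The plan is to construct the required system $\{\varphi_n\}$ by hand, in consecutive blocks, so that the partial sums $S_n(x)=\sum_{k=1}^n C_k(f)\varphi_k(x)$ have a maximal function that blows up almost everywhere. First I would normalize, assuming without loss of generality that $\|f\|_2=1$, since replacing $f$ by $f/\|f\|_2$ only rescales the coefficients. The guiding constraint is Bessel's inequality: every partial sum satisfies $\|S_n\|_2^2=\sum_{k\le n}C_k(f)^2\le\|f\|_2^2=1$, so no individual $S_n$ can be large in $L_2$. Hence the divergence cannot be produced by any single partial sum; it must come from the maximal function $\max_n|S_n|$ over a long block of indices, where different values of $n$ place large values at different points $x$, and it must be accumulated over infinitely many blocks.

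The heart of the argument is a one-block lemma. Given $f$ with $\|f\|_2=1$, the finite-dimensional span $\Pi$ of the functions already chosen, and parameters $\lambda>0$, $\delta>0$, $\varepsilon>0$, I would produce finitely many new functions $\varphi_{N+1},\dots,\varphi_{N+M}$, orthonormal and orthogonal to $\Pi$, such that (i) the associated Fourier coefficients carry only a small amount of energy, $\sum_{k=N+1}^{N+M}C_k(f)^2\le\varepsilon$, yet (ii) the block maximal partial sum exceeds $\lambda$ on a set of measure greater than $1-\delta$. These two requirements are compatible precisely because of the Menchov phenomenon: with a cleverly chosen (almost orthonormal) block one can force $\max_n\bigl|\sum_{k\le n}C_k(f)\varphi_k\bigr|$ to be large on most of $[0,1]$ while keeping the total energy $\sum C_k(f)^2$ small, at the cost of using a very large number $M$ of terms. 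This is the same mechanism that underlies the sharpness of the Menchov--Rademacher theorem, and the logarithmic growth built into such constructions is morally the $\log k$ weight appearing in $Q_n$; because we only ever spend energy $\varepsilon$ per block, enough of the mass of $f$ remains available to make the construction feasible for the fixed function $f$.

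With the block lemma in hand I would assemble the system iteratively. Choose heights $\lambda_m\to\infty$, tolerances $\delta_m$ with $\sum_m\delta_m<\infty$, and energies $\varepsilon_m$ with $\sum_m\varepsilon_m<1$. At stage $m$, let $\Pi$ be the span of all previously constructed $\varphi_k$ and apply the block lemma with parameters $(2\lambda_m,\delta_m,\varepsilon_m)$, obtaining a good set $G_m$ of measure greater than $1-\delta_m$. Since $\sum_m\varepsilon_m<1=\|f\|_2^2$, the total coefficient energy stays admissible and every stage is feasible. The partial sum entering stage $m$, say $S_{N_{m-1}}$, is a fixed $L_2$ function with $\|S_{N_{m-1}}\|_2\le 1$, so $|S_{N_{m-1}}|\le\lambda_m$ off a set of measure at most $\lambda_m^{-2}$; enlarging the exceptional set by this amount, on the resulting good set the block contribution of height exceeding $2\lambda_m$ dominates the carried term of height at most $\lambda_m$, yielding a global partial sum with $\max_{n\in\text{block }m}|S_n(x)|>\lambda_m$.

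Finally I would invoke Borel--Cantelli. Because the exceptional measures are summable, $\sum_m(\delta_m+\lambda_m^{-2})<\infty$, almost every $x$ lies in all but finitely many of the good sets, whence $\max_{n\in\text{block }m}|S_n(x)|>\lambda_m$ for all large $m$, and therefore $\limsup_{n\to\infty}|S_n(x)|\ge\sup_m\lambda_m=+\infty$ almost everywhere, as claimed. The main obstacle is squarely the one-block lemma: reconciling the smallness of the spent energy, which is forced by Bessel's inequality, with a maximal partial sum that is large on a set of nearly full measure. This is the genuinely hard, Menchov-type step; once it is available, the normalization, the energy bookkeeping that keeps $f$ from being exhausted across the blocks, and the Borel--Cantelli assembly are all routine.
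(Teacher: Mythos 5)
First, a point of orientation: the paper contains no proof of this statement. It is quoted as Theorem~A, a classical theorem of S.~Banach, with a reference to his monograph; so your attempt can only be measured against what such a proof actually requires, not against an argument in the paper. Measured that way, your global architecture is the right one (and is essentially the standard one for results of this type): work in blocks, keep the energy $\sum_k C_k^2(f)$ spent per block small so that Bessel's inequality never obstructs you, control the carried partial sum $S_{N_{m-1}}$ by Chebyshev, and assemble with Borel--Cantelli. All of that bookkeeping is correct. But the proposal does not prove the theorem: it reduces it to your ``one-block lemma,'' which you assert rather than prove, and the justification you offer for it --- that it is ``the same mechanism that underlies the sharpness of the Menchov--Rademacher theorem'' --- does not suffice. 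In the Menchov construction the coefficients are \emph{free parameters}: one builds an ONS $\{\psi_k\}$ and simultaneously chooses numbers $b_k$ with $\sum b_k^2$ small so that $\max_n\bigl|\sum_{k\le n}b_k\psi_k\bigr|$ is large on a set of large measure. In your lemma the coefficients are \emph{pinned to $f$}: they must equal $C_k(f)=\int_0^1 f\varphi_k\,dx$ for the very functions $\varphi_k$ you are constructing. You therefore need an ONS, orthogonal to the finite-dimensional space $\Pi$, whose correlations with the fixed function $f$ realize prescribed small numbers \emph{and} which still behaves pointwise like a Menchov system. This simultaneous requirement is the entire content of Banach's theorem, and it is exactly the step left open.

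To see that this is not a formality, try the natural ansatz: let $h$ be the normalized component of $f$ orthogonal to $\Pi$ (nonzero by your energy bookkeeping, which is indeed needed here), and put $\varphi_k=c_kh+\sqrt{1-c_k^2}\,\eta_k$ with $\eta_k$ orthonormal and orthogonal to $h$ and $\Pi$. Then $\int_0^1 f\varphi_k\,dx$ has the prescribed value, but for $k\neq j$ one gets $\int_0^1\varphi_k\varphi_j\,dx=c_kc_j\neq 0$, so the system is not orthonormal; repairing this by Gram--Schmidt changes both the coefficients $\int f\varphi_k$ and the partial sums, so the pointwise largeness must be re-established after the correction --- and controlling that perturbation (it is small only because each $|c_k|$ is small, and it must be spread so as not to destroy the largeness on the good set) is where the real work of the classical proof lies. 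In short: your reduction (normalization, Bessel, the Chebyshev step giving the exceptional sets of measure $\delta_m+\lambda_m^{-2}$, and Borel--Cantelli) is sound, and your remark that the unspent mass of $f$ keeps each stage from being vacuously impossible is a correct necessary condition; but ``admissible energy'' is not sufficiency, and the one-block lemma with coefficients tied to a given $f$ is a genuine, unproven gap rather than a routine citation of the Menchov phenomenon.
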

Moreover, we recall the following well-known result of Menshov and Rademacher (see e.g. \cite{KashinSaakyan} Ch.9, p.332).
\begin{MenshovRademacher}
	 \label{theorem1.1} 
	If $\{\varphi_n\}$ is an ONS on $\left[0,1\right]$ and a number sequence $\{c_n\}$ satisfies the condition
	\begin{eqnarray}
		\sum_{n=1}^{\infty}c_n^2 \log_2^2 n < +\infty, \notag
	\end{eqnarray}
	then the series 
	\begin{eqnarray} \notag
		\sum_{n=1}^{\infty}c_n \varphi_n\left(x\right)
	\end{eqnarray}
	converges a.e. on $\left[0,1\right]$.
	
\end{MenshovRademacher}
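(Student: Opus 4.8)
The plan is to prove this classical theorem via the Rademacher--Menshov maximal inequality combined with a dyadic block decomposition of the partial sums. Throughout, write $S_m(x) := \sum_{k=1}^m c_k \varphi_k(x)$. The first step is to establish the maximal inequality
\[
\int_0^1 \max_{1 \leq m \leq N} |S_m(x)|^2 \, dx \leq C (\log_2 2N)^2 \sum_{k=1}^N c_k^2 ,
\]
valid for every orthonormal system and with $C$ an absolute constant.

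To prove this inequality I would exploit the binary expansion of the index $m$. Fixing $p$ with $N \leq 2^p$ (and setting $c_k = 0$ for $k > N$), consider the dyadic block sums $\Delta_{l,a} := \sum_k c_k \varphi_k$ taken over the index intervals $\{a 2^l + 1, \ldots, (a+1)2^l\}$ for $0 \leq l \leq p$ and $0 \leq a < 2^{p-l}$. Writing $m$ in binary shows that each $S_m$ is a sum of at most $p+1$ of these blocks, one per level. By the Cauchy--Schwarz inequality $|S_m|^2 \leq (p+1)\sum_{l=0}^p |\Delta_{l, a_l(m)}|^2$, where $a_l(m)$ is the block chosen at level $l$; bounding each term by $\max_a |\Delta_{l,a}|^2$ and that in turn by $\sum_a |\Delta_{l,a}|^2$ makes the estimate independent of $m$. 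Integrating and using orthogonality — at each fixed level $l$ the blocks partition $\{1, \ldots, 2^p\}$, so $\sum_a \int_0^1 |\Delta_{l,a}|^2 \, dx = \sum_k c_k^2$ — produces a second factor $p+1$ and hence the claimed $(\log_2 2N)^2$.

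Next I would split the indices into the dyadic blocks $I_j := \{2^j + 1, \ldots, 2^{j+1}\}$ and set $T_j(x) := S_{2^{j+1}}(x) - S_{2^j}(x)$. Since $\log_2 k \asymp j$ for $k \in I_j$, the hypothesis gives $\sum_j j^2 \|T_j\|_2^2 \asymp \sum_k c_k^2 \log_2^2 k < +\infty$. By Cauchy--Schwarz, $\sum_j \|T_j\|_2 \leq \big(\sum_j j^{-2}\big)^{1/2}\big(\sum_j j^2 \|T_j\|_2^2\big)^{1/2} < +\infty$, so $\sum_j \|T_j\|_1 < +\infty$ and therefore $\sum_j |T_j(x)| < +\infty$ almost everywhere by monotone convergence. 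This shows that the subsequence $\{S_{2^j}(x)\}$ converges a.e.

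Finally I would control the oscillation inside each block. Applying the maximal inequality to the $2^j$ terms indexed by $I_j$ gives $\int_0^1 M_j(x)^2 \, dx \leq C j^2 \sum_{k \in I_j} c_k^2$, where $M_j(x) := \max_{2^j < m \leq 2^{j+1}} |S_m(x) - S_{2^j}(x)|$. Summing over $j$ and again using $\log_2 k \asymp j$ on $I_j$, the hypothesis yields $\sum_j \int_0^1 M_j^2 < +\infty$, so $\sum_j M_j(x)^2 < +\infty$ and hence $M_j(x) \to 0$ a.e. Since every $m$ lies in some $I_j$ and $|S_m(x) - S_{2^j}(x)| \leq M_j(x)$, the a.e. convergence of $\{S_{2^j}(x)\}$ forces $S_m(x)$ to converge a.e. as $m \to \infty$. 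The main obstacle is the maximal inequality with the sharp $(\log_2 N)^2$ factor: the dyadic decomposition is precisely what introduces this logarithmic loss, and it is exactly this factor that the $\log_2^2 n$ weight in the hypothesis is designed to absorb.
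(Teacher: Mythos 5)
Your proof is correct, and it is the classical Rademacher--Menshov argument: the maximal inequality with the $(\log_2 2N)^2$ factor via binary decomposition of the index, a.e. convergence of the dyadic subsequence $S_{2^j}$ via Cauchy--Schwarz on the block norms, and control of the in-block oscillation by reapplying the maximal inequality. Note that the paper itself gives no proof of this statement — it is quoted as the known Theorem B with a reference to Kashin and Saakyan (Ch.~9, p.~332), where essentially the same proof you wrote appears; apart from trivial boundary adjustments (the $j=0$ block and the fact that $\log_2 1 = 0$ says nothing about $c_1$, neither of which affects convergence), your argument is complete.
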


\section{The Main Results}
Our first main result reads:
\begin{theorem}\label{theorem3.1}
If, for any $\{a_n\}\in l_2,$ 
	\begin{eqnarray} \label{3.1}
		\qquad	B_n\left(d,a\right)=O\left(1\right),
	\end{eqnarray}
 then the sequence of functionals $\{U_n(f)\}$ is bounded on the space $BV$ for every $f\in BV.$ 

\end{theorem}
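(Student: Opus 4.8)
The plan is to start from the representation in Lemma \ref{lemma1.1} applied with $h(x)=Q_n(d,a,x)$. Since $f\in BV$ it is bounded and hence lies in $L_2(0,1)$ and takes only finite values, while $Q_n(d,a,\cdot)\in L_2(0,1)$ is a finite linear combination of the $\varphi_k$; so the hypotheses of the lemma are met. Formula \eqref{1.2} then splits $U_n(f)$ into three pieces: a sum over the partial integrals $\int_0^{i/n}Q_n\,dx$ weighted by the increments $f(i/n)-f((i+1)/n)$, a sum of ``local oscillation'' integrals $\int_{(i-1)/n}^{i/n}\bigl(f(x)-f(i/n)\bigr)Q_n\,dx$, and the boundary term $f(1)\int_0^1 Q_n\,dx$. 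I would bound each separately. The first piece is immediate: by the definition \eqref{1.4} each partial integral is at most $B_n(d,a)$ in absolute value, while $\sum_{i=1}^{n-1}|f(i/n)-f((i+1)/n)|\le V(f)$ since $f\in BV$, so this piece is at most $B_n(d,a)\,V(f)$, which is $O(1)$ by hypothesis \eqref{3.1}.

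The engine for the two remaining pieces is a single $L_2$-estimate for $Q_n$. By orthonormality and \eqref{equation3},
\[
\int_0^1 |Q_n(d,a,x)|^2\,dx=\sum_{k=1}^n d_k^2 a_k^2\log^2 k ,
\]
and the growth condition \eqref{A1} gives $d_k^2\log^2 k=O\!\left(k/\log^2(k+1)\right)$, a quantity that is eventually increasing in $k$ and therefore bounded by $Cn/\log^2(n+1)$ for all $k\le n$. Pulling this out of the sum and using $\sum_{k\le n} a_k^2\le\|a\|_2^2$ yields
\[
\Big(\int_0^1 |Q_n|^2\,dx\Big)^{1/2}\le C\,\frac{\sqrt n}{\log(n+1)}\,\|a\|_2 .
\]
Combined with Cauchy--Schwarz on any subinterval of length $1/n$, this produces the key decay estimate $\int_{(i-1)/n}^{i/n}|Q_n|\,dx\le n^{-1/2}\bigl(\int_0^1|Q_n|^2\bigr)^{1/2}=O\bigl(1/\log(n+1)\bigr)$, uniformly in $i$.

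With this in hand the second piece is controlled by replacing $|f(x)-f(i/n)|$ on $[(i-1)/n,i/n]$ by the variation $V_i$ of $f$ over that subinterval; since $\sum_i V_i=V(f)$, the second piece is at most $\bigl(\max_i\int_{(i-1)/n}^{i/n}|Q_n|\bigr)\,V(f)=O\bigl(V(f)/\log(n+1)\bigr)$, which in fact tends to $0$. For the boundary term I would split $\int_0^1 Q_n=\int_0^{(n-1)/n}Q_n+\int_{(n-1)/n}^{1}Q_n$: the first integral is $\le B_n(d,a)=O(1)$ by \eqref{1.4}, and the tail is $\le n^{-1/2}\bigl(\int_0^1|Q_n|^2\bigr)^{1/2}=O\bigl(1/\log(n+1)\bigr)$ by the estimate above, so $f(1)\int_0^1 Q_n$ is $O(1)$ because $f(1)$ is finite. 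Adding the three bounds gives $\limsup_n|U_n(f)|<\infty$, which is the assertion.

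The main obstacle I anticipate is the boundary term. The hypothesis \eqref{3.1} controls only the partial integrals up to $i/n$ with $i<n$, so it does not directly bound $\int_0^1 Q_n$; one genuinely has to peel off the last subinterval $[(n-1)/n,1]$ and estimate it by hand. This, together with the second piece, is exactly where the precise growth rate in \eqref{A1} is used: the exponent in $\log^2(n+1)$ is calibrated so that $\bigl(\int_0^1|Q_n|^2\bigr)^{1/2}=O\bigl(\sqrt n/\log(n+1)\bigr)$, and the extra factor $n^{-1/2}$ coming from the short interval then turns this into the decaying $O\bigl(1/\log(n+1)\bigr)$ that renders the two remaining pieces harmless.
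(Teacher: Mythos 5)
Your proposal is correct and follows essentially the same route as the paper: the decomposition of $U_n(f)$ via Lemma \ref{lemma1.1} with $h=Q_n(d,a,\cdot)$, the bound $B_n(d,a)\,V(f)$ for the increment sum, the Cauchy--Schwarz/orthonormality estimate $n^{-1/2}\bigl(\int_0^1 Q_n^2\bigr)^{1/2}=O(1)$ (driven by condition \eqref{A1}) for the oscillation sum, and the splitting of the boundary term at $1-1/n$ into a $B_n(d,a)$ part and a short-interval part. Your write-up is in fact slightly sharper in tracking the explicit $O\bigl(1/\log(n+1)\bigr)$ decay that the paper records only as $O(1)$, but the argument is the same.
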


\begin{proof}

	By using Lemma \ref{lemma1.1}, when $h\left(x\right) = Q_n \left(d,a,x\right)$ we have
	\begin{eqnarray} \label{key8}
		\int_{0}^{1} f(x) Q_n(d,a,x)dx&=&\sum_{i=1}^{n-1}\left(f\left(\frac{i}{n}\right)-f\left(\frac{i+1}{n}\right)\right)\int_{0}^{i/n}Q_n(d,a,x)dx \notag \\
		&&+ \sum_{i=1}^{n}\int_{\left(i-1\right)/n}^{i/n}\left(f\left(x\right) -f\left(\frac{i}{n}\right)\right)Q_n(d,a,x)dx  \notag  \\
		&&+f\left(1\right)\int_{0}^{1}Q_n(d,a,x)dx:=A_1+A_2+A_3.  
	\end{eqnarray}

Let $f\in BV$, then we get (see \eqref{1.4})

\begin{eqnarray*}
	\left|A_1\right| &&\leq \max_{1\leq i < n} \left|\int_{0}^{i/n} Q_n(d,a,x)dx\right| \  \cdot \  \sum_{i=1}^{n-1}\left|f\left(\frac{i}{n}\right)-f\left(\frac{i+1}{n}\right)\right| \\
	\notag
	\\
	&&\leq V(f) B_n(d,a).
\end{eqnarray*}
Hence, from condition \eqref{3.1} it follows that
$$\left|A_1\right|=O(1)V(f).$$

By applying Hölder's inequality and \eqref{A1} , we get (since $\{a_n\}\in l_2$)
\begin{eqnarray*}
		\left|A_2\right| && \leq \sum_{i=1}^n \sup_{x\in \left[\frac{i-1}{n}, \frac{i}{n}\right]} \left|f(x)-f\left(\frac{i}{n}\right)\right| \int_{\left(i-1\right)/n}^{i/n}\left|Q_n(d,a,x)\right|dx  \\ &&\leq V(f) \max_{1\leq i \leq n} \int_{\left(i-1\right)/n}^{i/n}\left|Q_n(d,a,x)\right|dx \\ &&\leq   V(f) \frac{1}{\sqrt{n}} \left( \int_{0}^{1} Q_n^2(d,a,x)dx \right)^{1/2} \\
		&& =   \frac{ V(f)}{\sqrt{n}} \left( \int_{0}^{1} \left(\sum_{k=1}^{n}d_k a_k \log k \varphi_k(x)\right)^2 dx \right)^{1/2} \\
		&& = \frac{ V(f)}{\sqrt{n}} \left( \sum_{k=1}^{n} d_k^2 a_k^2 \log^2 k  \right)^{1/2} \\ &&=  V(f) \cdot \frac{\max\limits_{1\leq k \leq n} |d_k| } {\sqrt{n}} \cdot \log n \left(\sum_{k=1} ^n a_k^2\right)^{1/2} = O(1)V(f).
	\end{eqnarray*}
	
	By using \eqref{1.4} and Cauchy's inequality, for any $\{a_n\}\in l_2$ we find that
	\begin{eqnarray*}
		\left|A_3\right| &&= O(1) \left|\int_{0}^{1} Q_n(d,a,x)dx\right| \\ &&\leq O(1)\left( \left|\int_{0}^{1-1/n} Q_n(d,a,x)dx\right|+ \left|\int_{1-1/n}^{1} Q_n(d,a,x)dx\right|\right) \\
		&& \leq O(1) \left(B_n(d,a) + \frac{1}{\sqrt{n}} \left(\int_{0}^{1} Q_n^2 (d,a,x) dx\right)^{1/2}\right) = O(1).
	\end{eqnarray*}
Taking into consideration in \eqref{key8} the above estimates of $|A_1|,$  $|A_2|$  and $|A_3|$ we have that
	$$\left|\int_{0}^{1} f(x)\sum_{k=1}^{n}d_k a_k \log k \varphi_k(x)\right| =O(1).$$
	
It follows that
	\begin{equation} \label{equation6}
		\left|U_n(f) \right|\leq M(f),
	\end{equation}
	where $M(f)$ is a constant which does not depend on $n$ and the proof is complete.
	
\end{proof}

	Next we state a result which, in particular, show that the statement in Theorem \ref{theorem3.1} is, in a sense, sharp.
	
\begin{theorem}\label{theorem3.2}
	If for some $\{b_n\}\in l_2$
	$$\limsup_{n \rightarrow \infty}  B_n(d,b)=+\infty,$$
	then there exists a function $g\in A$, such that
	$$\limsup_{n \rightarrow \infty}  \left|U_n(g)\right|=+\infty.$$
\end{theorem}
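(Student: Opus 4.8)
The plan is to deduce the result from the Banach--Steinhaus theorem applied to $\{U_n\}$ viewed as bounded linear functionals on the Banach space $A$. First I would note that each $U_n$ is indeed a bounded functional on $A$, since
$$|U_n(f)| \le \|f\|_C \int_0^1 |Q_n(d,b,x)|\,dx \le \|f\|_A \int_0^1 |Q_n(d,b,x)|\,dx.$$
By the Banach--Steinhaus theorem, if $\sup_n |U_n(g)| < +\infty$ held for every $g \in A$, then the operator norms $\|U_n\|_{A^*}$ would be uniformly bounded. Hence it suffices to prove that $\sup_n \|U_n\|_{A^*} = +\infty$: then there exists $g \in A$ with $\sup_n |U_n(g)| = +\infty$, and since each $|U_n(g)|$ is finite this is the same as $\limsup_n |U_n(g)| = +\infty$, which is the assertion.

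To bound $\|U_n\|_{A^*}$ from below I would exhibit, for each $n$, a cheap test function $f_n \in A$ with $\|f_n\|_A$ bounded and $|U_n(f_n)|$ comparable to $B_n(d,b)$. Writing $\Phi_n(x) = \int_0^x Q_n(d,b,t)\,dt$ and letting $i_n$ be an index with $1\le i_n<n$ realizing the maximum in \eqref{1.4}, so that $|\Phi_n(i_n/n)| = B_n(d,b)$, I would take $f_n$ to be the continuous, piecewise-linear ``smoothed characteristic function'' equal to $1$ on $[0, i_n/n]$, decreasing linearly to $0$ on $[i_n/n, (i_n+1)/n]$, and equal to $0$ on $[(i_n+1)/n, 1]$ (note $(i_n+1)/n \le 1$ since $i_n<n$). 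Then $f_n$ is absolutely continuous with $\|f_n\|_C = 1$ and $\int_0^1 |f_n'|\,dx = 1$, so $\|f_n\|_A = 2$.

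Next I would compute $U_n(f_n) = \Phi_n(i_n/n) + \int_{i_n/n}^{(i_n+1)/n} f_n(x)Q_n(d,b,x)\,dx$. The first term has modulus exactly $B_n(d,b)$. For the correction term, the same Cauchy--Schwarz/H\"older estimate used for $A_2$ in the proof of Theorem \ref{theorem3.1}, together with \eqref{A1}, gives
$$\left| \int_{i_n/n}^{(i_n+1)/n} f_n Q_n\,dx \right| \le \frac{1}{\sqrt n}\left(\int_0^1 Q_n^2\,dx\right)^{1/2} = \frac{1}{\sqrt n}\left(\sum_{k=1}^n d_k^2 b_k^2 \log^2 k\right)^{1/2} = O\!\left(\frac{\|b\|_{l_2}}{\log n}\right) \to 0.$$
Hence $|U_n(f_n)| \ge B_n(d,b) - o(1)$, so $\|U_n\|_{A^*} \ge \tfrac12\bigl(B_n(d,b) - o(1)\bigr)$. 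Since $\limsup_n B_n(d,b) = +\infty$ by hypothesis, this yields $\sup_n \|U_n\|_{A^*} = +\infty$, and the Banach--Steinhaus step above completes the proof.

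The delicate point, which I expect to be the only real obstacle, lies entirely in controlling the smoothing error: the characteristic function $\chi_{[0,i_n/n]}$ realizes the value $B_n(d,b)$ exactly but does not belong to $A$, so it must be regularized, and one must check that regularizing over the scale $1/n$ simultaneously keeps the $A$-norm bounded and forces the correction term to be $o(1)$. This is precisely where the growth restriction \eqref{A1} on $\{d_n\}$ enters, mirroring its role in Theorem \ref{theorem3.1}. Alternatively, one could bypass Banach--Steinhaus and build $g$ explicitly by a gliding-hump argument, choosing a rapidly increasing subsequence $n_j$ with $B_{n_j}(d,b)\to\infty$ and setting $g = \sum_j \lambda_j f_{n_j}$ with small weights $\lambda_j$; there the obstacle shifts to estimating the cross terms $U_{n_j}(f_{n_k})$ with $k\ne j$, which is more laborious than the soft argument above.
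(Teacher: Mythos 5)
Your proposal is correct, and its skeleton is the same as the paper's: build test functions $f_n$ with $\|f_n\|_A\leq 2$ whose pairing with $Q_n(d,b,\cdot)$ is $B_n(d,b)$ up to a controlled error, conclude $\sup_n\|U_n\|_{A^*}=+\infty$, and finish with Banach--Steinhaus. The one genuine difference is the orientation of the test function, and it buys you a real simplification. The paper takes $f_n=0$ on $[0,i_n/n]$ and $f_n=1$ on $[(i_n+1)/n,1]$, so in the decomposition of Lemma \ref{lemma1.1} the boundary term $f_n(1)\int_0^1 Q_n(d,b,x)\,dx$ survives; to control it the paper must first dispose of the case $\limsup_n\left|\int_0^1 Q_n(d,b,x)\,dx\right|=+\infty$ separately (taking $g\equiv 1$) and then work under the extra assumption \eqref{equation7} in the main case. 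Your reversed profile ($f_n\equiv 1$ near $0$, $f_n\equiv 0$ near $1$) makes that boundary term vanish identically since $f_n(1)=0$, so no case distinction is needed, and you can compute $U_n(f_n)=\Phi_n(i_n/n)+\int_{i_n/n}^{(i_n+1)/n}f_nQ_n\,dx$ directly instead of routing through Lemma \ref{lemma1.1}. Your error estimate is also slightly sharper than needed: the paper only records the smoothing term as $O(1)$, which already suffices since $B_n(d,b)\to\infty$ along a subsequence, whereas you show it is $O\left(\|b\|_{l_2}/\log n\right)=o(1)$ via \eqref{A1}; both are fine. Net effect: your argument proves the same statement with one case instead of two, at no extra cost.
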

\begin{proof}
	First we suppose that
	\begin{equation*}
	\limsup_{n \rightarrow \infty} \left|\int_{0}^{1} Q_n\left(d,b,x\right)dx\right|=+\infty.
	\end{equation*}

Then, if  $g(x)=1,\text{ }x\in[0,1],$ we have 
\begin{equation*}
	\limsup_{n \rightarrow \infty} \left|\int_{0}^{1} g(x)Q_n\left(d,b,x\right)dx\right|=+\infty.
\end{equation*}
Obviously $g \in A.$ Theorem \ref{theorem3.2} holds in this case.
	
	Next  we suppose that
	\begin{eqnarray}\label{equation7}
		\left|\int_{0}^{1}Q_n\left(d,b,x\right)dx\right|=O(1).
	\end{eqnarray}
Let $1\leq i_n < n$ be an integer, such that 
	$$B_n\left(d,b\right)=\max_{1\leq i < n} \left|\int_{0}^{i/n}Q_n\left(d,b,x\right)dx\right|=\left|\int_{0}^{i_n/n}Q_n\left(d,c,x\right)dx\right|.$$
	
	Suppose that for some sequence $b=\{b_k\}\in l_2$
	\begin{eqnarray}
		\label{3.6}    \limsup_{n \rightarrow \infty}  B_n(d,b)=+\infty.
	\end{eqnarray}
Consider the following sequence of test functions:
	\begin{equation*}
		f_{n}\left( x\right) =\left\{ 
		\begin{array}{ccc}
			0, & \text{when} & x\in \left[ 0,\frac{i_{n}}{n}\right] \\ 
			1, & \text{when} & x\in \left[ \frac{i_{n}+1}{n},1\right] \\ 
			\text{continuous and linear, } & \text{when} & x\in \left[ \frac{i_{n}}{n}, \frac{i_{n}+1}{n} \right].
		\end{array}%
		\right.
	\end{equation*}
Then (see \eqref{*}) 
	$$\left\Vert f_n\right\Vert_A = \int_{0}^{1}\left|f_{n}^{'} \left(x\right)\right|dx+\left \Vert f_n\left(x\right) \right \Vert_C\leq 2.$$
Furthermore,
	\begin{eqnarray}
		\label{3.7} && \left|\sum_{i=1}^{n-1} \left(f_n\left(\frac{i}{n}\right)-f_n\left(\frac{i+1}{n}\right)\right) \int_{0}^{i/n}Q_n\left(d,b,x\right)dx\right| \\
		&&=\left|\int_{0}^{{i_n}/n}Q_n\left(d,b,x\right)dx\right|=B_n\left(d,b\right). \notag
	\end{eqnarray}
Then, if $x\in \left[\frac{i-1}{n}, \frac{i}{n}\right]$ we find that
	\begin{equation*}
		\left|f_{n}\left( x\right)-f_{n}\left( \frac{i}{n}\right)\right| \left\{ 
		\begin{array}{ccc}
			\leq 1, & \text{if} & i=i_{n}+1, \\ 
			0, & \text{if} & i \neq i_{n}+1, \\ 
		\end{array}%
		\right.
	\end{equation*}
	and it implies that  (since $\{b_n\} \in l_2$)
	\begin{eqnarray}
		\label{3.8}  &&\left|\sum_{i=1}^{n} \int_{\left(i-1\right)/n}^{i/n} \left(f\left(x\right)-f\left(\frac{i}{n}\right) \right) Q_n\left(d,b,x\right)dx\right| \\
		&& \leq \sum_{i=1}^n  \sup_{x\in \left[\frac{i-1}{n}, \frac{i}{n}\right]} \left|f\left(x\right)-f\left(\frac{i}{n}\right)\right|\int_{\left(i-1\right)/n}^{i/n}\left| Q_n(d,b,x) \right| dx\notag \\
		&&\le V(f)\underset{1\le i\le n}{\mathop{\max }}\,\int\limits_{{(i-1)}/{n}}^{{i}/{n}}{ \left|Q_n(d,b,x)\right| dx} \notag\\ 
		&& =O(1)\frac{1}{\sqrt{n}}\left(\int_{0}^1  Q_n^2(d,b,x) dx \right)^{1/2} \notag \\
		&&=O(1)\frac{1}{\sqrt{n}}\max_{1\leq k \leq n }d_k \log n{{\left( \sum\limits_{k=1}^{n}{b_{k}^{2}} \right)}^{\frac{1}{2}}}=O(1).	\notag	
	\end{eqnarray}

	Consequently, by using \eqref{1.2} when $f\left(x\right)=f_n\left(x\right)$ and $Q_n\left(d,a,x\right)=Q_n\left(d,b,x\right),$ and combining \eqref{equation7}, (\ref{3.7}), (\ref{3.8}), we get that
	$$\left|\int_{0}^{1}f_n\left(x\right)Q_n\left(d,b,x\right)\right|dx \geq B_n(d,b)-O\left(1\right)-O(1).$$
	
	From here and from (\ref{3.6}) we have, that
	$$\limsup_{n\rightarrow \infty}{\left|\int_{0}^{1}f_n\left(x\right)Q_n\left(d,b,x\right)dx\right|}=+\infty.$$
	
	Finally, we note that since 
	$$U_n\left(f\right)=\int_{0}^{1}f\left(x\right)Q_n\left(d,b,x\right)dx$$
	is a sequence of linear bounded functionals on $A$, then by the Banach-Steinhaus theorem, there exists a function $g\in A$ such that
	\begin{eqnarray}\label{equation11}
		\limsup_{n\rightarrow \infty}|U_n(g)|=\limsup_{n\rightarrow \infty}{\left|\int_{0}^{1}g\left(x\right)Q_n\left(d,b,x\right)dx\right|}=+\infty.
	\end{eqnarray}

	The proof is complete.
	
\end{proof}

\section{ Applications concerning convergence of general Fourier series}

Our first application reads:
\begin{theorem}
	\label{theorem3.3}
	If condition \eqref{3.1} of Theorem \ref{theorem3.1} holds then, for any function $f\in BV,$
	$$\sum_{k=1}^{\infty} d_k^2 C_k^2(f)\log^2 k<+\infty .$$
	
\end{theorem}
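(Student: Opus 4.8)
The plan is to reinterpret the conclusion of Theorem \ref{theorem3.1} as a statement about the partial sums of one fixed scalar sequence tested against arbitrary $l_2$ vectors, and then to extract square-summability by the uniform boundedness principle. First I would rewrite the functional explicitly in terms of Fourier coefficients. Substituting \eqref{equation3} into \eqref{equation2} and using $C_k(f)$ from \eqref{fourier} gives
\begin{equation*}
U_n(f) = \int_0^1 f(x) \sum_{k=1}^n d_k a_k \log k \, \varphi_k(x)\, dx = \sum_{k=1}^n d_k a_k \log k \, C_k(f).
\end{equation*}
Fixing $f\in BV$, I would set $\beta_k := d_k \log k \, C_k(f)$, so that $U_n(f)=\sum_{k=1}^n \beta_k a_k$. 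Since condition \eqref{3.1} is assumed, Theorem \ref{theorem3.1} guarantees that $\{U_n(f)\}$ is bounded on $BV$; that is, for every $\{a_k\}\in l_2$ we have $\limsup_n|U_n(f)|<+\infty$, and as each term is finite this yields $\sup_n\left|\sum_{k=1}^n \beta_k a_k\right|<+\infty$.

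Next I would introduce, for each $n$, the linear functional $T_n:l_2\to\mathbb{R}$ defined by $T_n(a)=\sum_{k=1}^n \beta_k a_k$ for $a=\{a_k\}\in l_2$. Each $T_n$ is a bounded linear functional, being represented by the finitely supported vector $(\beta_1,\dots,\beta_n,0,0,\dots)$, and by Cauchy's inequality its operator norm equals $\|T_n\|=\left(\sum_{k=1}^n \beta_k^2\right)^{1/2}$. The previous paragraph shows precisely that $\{T_n\}$ is pointwise bounded, $\sup_n|T_n(a)|<+\infty$ for every $a\in l_2$. Since $l_2$ is a Banach space, the Banach--Steinhaus theorem then delivers uniform boundedness of the norms, $\sup_n\|T_n\|=M<+\infty$.

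Finally, $\sup_n\left(\sum_{k=1}^n \beta_k^2\right)^{1/2}=M<+\infty$ forces the monotone partial sums $\sum_{k=1}^n \beta_k^2$ to remain bounded, whence $\sum_{k=1}^\infty \beta_k^2\le M^2<+\infty$. Recalling $\beta_k=d_k \log k \, C_k(f)$, this is exactly the asserted inequality $\sum_{k=1}^\infty d_k^2 C_k^2(f)\log^2 k<+\infty$. I expect the only genuinely delicate point to be the clean passage from the $BV$-boundedness conclusion of Theorem \ref{theorem3.1} (phrased for a fixed $f$ but all $a\in l_2$) to a statement purely about the $l_2$ functionals $T_n$ acting on the variable $a$; once the roles of $f$ (fixed) and $a$ (the Banach--Steinhaus test vector) are correctly separated, the remaining steps are standard.
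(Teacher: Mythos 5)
Your proposal is correct and follows essentially the same route as the paper: both identify $U_n(f)=\sum_{k=1}^n d_k a_k \log k\, C_k(f)$, deduce from Theorem \ref{theorem3.1} that these partial sums are bounded for every $\{a_k\}\in l_2$, and then pass to $\{d_k C_k(f)\log k\}\in l_2$ by $l_2$-duality. The only difference is that the paper simply asserts this last step (``This means that $\{d_k C_k(f)\log k\}\in l_2$''), whereas you prove it properly by applying the Banach--Steinhaus theorem to the functionals $T_n(a)=\sum_{k=1}^n \beta_k a_k$ on $l_2$ and using $\|T_n\|=\bigl(\sum_{k=1}^n \beta_k^2\bigr)^{1/2}$ --- which is exactly the standard justification the paper leaves implicit.
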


	\begin{proof}
	By using condition \eqref{3.1} of Theorem \ref{theorem3.1}, and using equation \eqref{equation6} and \eqref{fourier} we have that
	\begin{eqnarray*}
	\sum_{k=1}^{n} d_k C_k(f)\log k &&= \int_{0}^{1} f(x) 	\sum_{k=1}^{n} d_k \log k \varphi_{k}(x) dx \\ &&=\int_{0}^{1} f(x) Q_n(d,a,x) dx.
		\end{eqnarray*}
	Hence,
		$$\sum_{k=1}^{n} d_k a_k C_k(f) \log k = U_n(f). $$
		Since
		$$\left|U_n(f)\right|=O(1)$$
		it follows that
		$$\sum_{k=1}^{n} d_k a_k \log k C_k(f)= O(1). $$
	
	Now, if we suppose that for any $\{a_k\} \in l_2,$
	$$\underset{n\to \infty }{\mathop{\limsup }}\,\,\left| {{U}_{n}}(f) \right|<+\infty ,$$
	then the following series
		\begin{equation}\label{equation12}
			\sum_{k=1}^{\infty} d_k a_k \log k C_k(f)  
		\end{equation}
	is convergent.
	
	This means that  $\{d_k C_k(f) \log k\}  \in l_2,$ or
	$$\sum_{k=1}^{\infty} d_k^2 C_k^2(f)\log^2 k<+\infty. $$
	The proof is complete.
		
	\end{proof}
In particular, Theorem A and Theorem \ref{theorem3.1} imply the following new result:

\begin{corollary}	\label{theorem3.4}
	If condition \eqref{3.1} of Theorem \ref{theorem3.1} holds  for any function $f\in BV,$ then the following series
	$$\sum_{k=1}^{\infty} d_k C_k (f) \varphi_{k}(x) $$
	is convergent a.e. on $[0,1].$
	
\end{corollary}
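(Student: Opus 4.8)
The plan is to recognize this as a direct consequence of Theorem \ref{theorem3.3} combined with the Menshov--Rademacher criterion (Theorem B), so the work is essentially to verify that the coefficients of the series in question meet the hypothesis of Theorem B. First I would invoke Theorem \ref{theorem3.3}: since we are assuming that condition \eqref{3.1} holds for the given $f\in BV$, that theorem applies verbatim and delivers
$$\sum_{k=1}^{\infty} d_k^2 C_k^2(f)\log^2 k<+\infty.$$
This is the only quantitative input needed, and it already packages the boundedness content of Theorem \ref{theorem3.1}.

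Next I would introduce the abbreviation $c_k := d_k C_k(f)$, so that the series whose convergence we must establish is exactly $\sum_{k=1}^{\infty} c_k \varphi_k(x)$. The displayed summability bound then reads $\sum_{k=1}^{\infty} c_k^2 \log^2 k<+\infty$. To feed this into Theorem B I would observe that $\log_2 k$ and $\log k$ differ only by the fixed multiplicative constant $1/\log 2$, so that $\sum_{k=1}^{\infty} c_k^2 \log_2^2 k<+\infty$ holds if and only if $\sum_{k=1}^{\infty} c_k^2 \log^2 k<+\infty$. Hence the coefficient sequence $\{c_k\}=\{d_k C_k(f)\}$ satisfies precisely the convergence condition demanded by the Menshov--Rademacher theorem with respect to the ONS $\{\varphi_k\}$.

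Finally I would apply Theorem B to the sequence $\{c_k\}$ and the system $\{\varphi_k\}$ to conclude that
$$\sum_{k=1}^{\infty} c_k \varphi_k(x)=\sum_{k=1}^{\infty} d_k C_k(f)\varphi_k(x)$$
converges a.e. on $[0,1]$, which is the assertion of the corollary. I do not expect a genuine obstacle here: the entire difficulty has been absorbed into Theorem \ref{theorem3.3}, and the remaining step is the routine matching of the logarithmic weight against the Menshov--Rademacher threshold. The only point requiring a word of care is the harmless change of logarithm base noted above, which does not affect the convergence of the weighted series.
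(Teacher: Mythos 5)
Your proposal is correct and follows exactly the route the paper intends: the coefficient bound from Theorem \ref{theorem3.3} fed into the Menshov--Rademacher theorem (Theorem B), with the harmless change of logarithm base. Note that the paper itself gives no explicit proof and cites ``Theorem A'' (Banach's divergence theorem) as the ingredient, which is evidently a slip for Theorem B --- your reading supplies the argument the authors clearly had in mind.
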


\begin{remark}
If condition \eqref{3.1} of Theorem \ref{theorem1.1} is fulfilled and  $d_k=1,$ $k = 1, 2, \dots , $ then, for any $f\in BV$ the series
$$\sum_{k=1}^{\infty} C_k (f) \varphi_{k}(x) $$
is convergent a.e. on $[0,1].$
\end{remark}

Next, we state the following result showing that Theorem \ref{theorem3.3} is, in  a sense, sharp.

\begin{theorem}
	\label{theorem3.5}
	For any function $g\in A$ $(g \ne 0 )$ there exists an ONS  $\{\varphi_{n}\}$ such that for some $\{a_n\} \in l_2$ and $d_k=1,$ $k = 1, 2, \dots  $
	\begin{eqnarray}
		\label{eq*} \limsup_{n \rightarrow \infty}\sum_{k=1}^{n}C_k^2(f) \log ^2 k=\limsup_{n \rightarrow \infty}\left|U_n(g)\right|=+\infty.
	\end{eqnarray}
\end{theorem}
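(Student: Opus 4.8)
The plan is to combine Banach's Theorem A with the contrapositive of the Menshov--Rademacher Theorem B to produce the required ONS, and then to invoke the Banach--Steinhaus theorem to select the sequence $\{a_n\}$. Note that $A\subset BV$, so Theorem \ref{theorem3.3} applies to $g\in A$ and the assertion genuinely exhibits the sharpness of that result. First I would apply Theorem A to the non-zero function $g\in A\subset L_2$: this yields an ONS $\{\varphi_n\}$ for which
$$\limsup_{n\to\infty}\left|\sum_{k=1}^n C_k(g)\varphi_k(x)\right|=+\infty \quad\text{a.e. on } [0,1].$$
In particular, the orthogonal series $\sum_k C_k(g)\varphi_k$ fails to converge on a set of positive measure.

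For the first equality in \eqref{eq*} I would argue by contradiction against Theorem B. If the series $\sum_{k=1}^\infty C_k^2(g)\log^2 k$ were convergent, then, since $\log$ and $\log_2$ differ only by a constant factor, the Menshov--Rademacher condition $\sum_k C_k^2(g)\log_2^2 k<+\infty$ would hold, and Theorem B would force $\sum_k C_k(g)\varphi_k(x)$ to converge a.e.\ on $[0,1]$, contradicting the first step. Hence $\sum_{k=1}^\infty C_k^2(g)\log^2 k=+\infty$, and because all terms are non-negative this is exactly $\limsup_{n\to\infty}\sum_{k=1}^n C_k^2(g)\log^2 k=+\infty$.

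Finally, with $d_k=1$ we have $U_n(g)=\sum_{k=1}^n a_k\,\beta_k$, where $\beta_k:=C_k(g)\log k$, and the previous step says precisely that $\{\beta_k\}\notin l_2$. Consider the bounded linear functionals $L_n(a):=\sum_{k=1}^n a_k\beta_k$ on $l_2$; their operator norms satisfy $\|L_n\|=\big(\sum_{k=1}^n\beta_k^2\big)^{1/2}\to+\infty$. Since $\sup_n\|L_n\|=+\infty$, the Banach--Steinhaus theorem guarantees a sequence $a=\{a_n\}\in l_2$ with $\limsup_{n\to\infty}|L_n(a)|=+\infty$, that is, $\limsup_{n\to\infty}|U_n(g)|=+\infty$, which is the second equality in \eqref{eq*}. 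I expect the main delicate point to be the correct use of the contrapositive of Theorem B in the second step: one must check that the a.e.\ divergence supplied by Banach's theorem genuinely denies the Menshov--Rademacher sufficient condition (the change of logarithmic base being harmless), together with verifying that $g\neq 0$ so that Theorem A is indeed applicable.
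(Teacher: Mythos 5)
Your proposal is correct and follows essentially the same route as the paper: Theorem A produces the ONS, the contrapositive of Theorem B (with the harmless change of logarithmic base) yields $\sum_{k=1}^{\infty} C_k^2(g)\log^2 k=+\infty$, and the sequence $\{a_n\}$ comes from the uniform boundedness principle on $l_2$. The only cosmetic difference is the last step: the paper argues by contradiction, assuming $\limsup_{n\to\infty}|U_n(g)|<+\infty$ for every $a\in l_2$ and concluding $\{C_k(g)\log k\}\in l_2$ via the argument of Theorem \ref{theorem3.3}, whereas you invoke the resonance form of Banach--Steinhaus directly with $\|L_n\|=\bigl(\sum_{k=1}^n C_k^2(g)\log^2 k\bigr)^{1/2}\to+\infty$; this is the same principle, and your phrasing in fact avoids the paper's loose intermediate claim that boundedness of the partial sums implies convergence of the series.
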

\begin{proof}
	Let $g$ be an arbitrary function. According to the Banach Theorem there exists an ONS $\{\varphi_{n}\}$ such that
	\begin{eqnarray}
		\label{14} 
		\limsup_{n \rightarrow \infty}\left|\sum_{k=1}^{n} C_k(g) \varphi_{k}(x) \right|=+\infty \ \  \text{a.e. on} \ [0,1].
	\end{eqnarray}
Consequently, by using \eqref{14} and Theorem A,  we conclude that

\begin{eqnarray}
	\label{15} 
	\sum_{k=1}^{\infty} C_k^2(g)\log^2 k=+\infty.
\end{eqnarray}

	Indeed, suppose the contrary to \eqref{eq*} namely that for arbitrary $\{a_n\} \in l_2$
	$$\limsup_{n \rightarrow \infty}\left|U_n(g)\right|<+\infty.$$
	Then as it follows from \eqref{equation12} when $d_k=1,$ $k = 1, 2, \dots  $ for any  $\{a_n\} \in l_2$ that the series 
	$$\sum_{k=1}^{\infty} a_k\log k C_k(g)$$
	is convergent. Thus, $\{C_k(g) \log k\} \in l_2$ or
	$$\sum_{k=1}^{\infty}  C_k^2(g) \log^2 k <+\infty,$$
	which contradicts \eqref{15}.
	
	This contradiction shows that \eqref{eq*} holds so the proof is complete.
	
\end{proof}

Finally, we state the following efficiency result:
\begin{theorem}
	\label{theorem4.1}
	Let $\{\varphi_{n}\}$ be an ONS such that uniformly with respect to $x\in[0,1]$ it holds that
	\begin{eqnarray}
		\label{16}
		\int_{0}^{x} \varphi_{n}(x)dx = O \left(\frac{1}{n}\right).
	\end{eqnarray}
Then for any $a=\{a_n\} \in l_2,$ 
\begin{eqnarray}
	\label{**} B_n(d,a) \leq  \max_{x\in [0,1]} \left|\int_{0}^{x} \sum_{k=1}^{n} d_k a_k \log k \varphi_{k}(u) du \right|= O(1).
\end{eqnarray}
\end{theorem}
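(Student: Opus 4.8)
The plan is to reduce the statement to a convergence estimate for a single numerical series and then apply the Cauchy–Schwarz inequality. The first inequality in \eqref{**} is immediate: since $B_n(d,a)$ is the maximum of $\left|\int_0^{i/n} Q_n(d,a,x)\,dx\right|$ over the finite set of points $i/n$ with $1\le i<n$, it cannot exceed the maximum of $\left|\int_0^x Q_n(d,a,u)\,du\right|$ taken over all $x\in[0,1]$. So everything reduces to proving that this latter maximum is $O(1)$ uniformly in $n$.

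First I would interchange the finite sum with the integral, writing
$$\int_0^x Q_n(d,a,u)\,du = \sum_{k=1}^n d_k a_k \log k \int_0^x \varphi_k(u)\,du.$$
By the hypothesis \eqref{16} there is a constant $C$ with $\left|\int_0^x \varphi_k(u)\,du\right|\le C/k$ uniformly in $x$ and $k$, and by condition \eqref{A1} there is a constant $C'$ with $|d_k|\le C'\sqrt{k}/\log^2(k+1)$. Substituting these two bounds and using $\log k\le\log(k+1)$ to absorb one logarithmic factor gives
$$\left|\int_0^x Q_n(d,a,u)\,du\right| \le C C' \sum_{k=1}^n \frac{|a_k|\,\log k}{\sqrt{k}\,\log^2(k+1)} \le C C' \sum_{k=1}^n \frac{|a_k|}{\sqrt{k}\,\log(k+1)}.$$

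Next I would apply the Cauchy–Schwarz inequality to split this into the $l_2$ norm of $\{a_k\}$ and a purely numerical series:
$$\sum_{k=1}^n \frac{|a_k|}{\sqrt{k}\,\log(k+1)} \le \left(\sum_{k=1}^n a_k^2\right)^{1/2}\left(\sum_{k=1}^n \frac{1}{k\,\log^2(k+1)}\right)^{1/2}.$$
The first factor is bounded by $\|a\|_{l_2}$ since $\{a_k\}\in l_2$. The crux of the argument is the convergence of the second factor: the series $\sum_{k\ge 2}\frac{1}{k\log^2 k}$ converges (by the integral test, $\int_2^\infty \frac{dt}{t\log^2 t}=1/\log 2<+\infty$), and $\frac{1}{k\log^2(k+1)}\le\frac{1}{k\log^2 k}$, so the partial sums are uniformly bounded. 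This is precisely where the exponent $2$ on the logarithm in \eqref{A1} is needed — it is exactly tuned to make this numerical series summable. Combining these facts yields a bound independent of $n$, which gives $B_n(d,a)=O(1)$ and completes the proof.

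I expect the only genuine obstacle to be the verification that $\sum 1/(k\log^2 k)$ converges, since this is what forces the particular form of the growth condition \eqref{A1}; every other step is routine bounding followed by an application of Cauchy–Schwarz. There is no interchange-of-limits subtlety, as all the sums involved are finite.
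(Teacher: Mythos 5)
Your proposal is correct and follows essentially the same route as the paper: bound $B_n(d,a)$ by the maximum over all $x\in[0,1]$, interchange the finite sum with the integral, insert the hypothesis \eqref{16}, and finish with Cauchy--Schwarz plus the convergence of $\sum_k 1/(k\log^2(k+1))$ forced by \eqref{A1}. The only (immaterial) difference is that you substitute the bound on $|d_k|$ before applying Cauchy--Schwarz, whereas the paper applies Cauchy--Schwarz first and then bounds $d_k^2$ inside the second factor.
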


\begin{proof}
	According to \eqref{16} and by the Cauchy inequality we get (see \eqref{A1})
	\begin{eqnarray*}
		B_n(d,a) && \leq  \max_{x\in [0,1]} \left|\int_{0}^{x} \sum_{k=1}^{n} d_k a_k \log k \varphi_{k}(u) du \right| \\
		&& =\max_{x\in [0,1]} \left| \sum_{k=1}^{n} d_k a_k \log k \int_{0}^{x}\varphi_{k}(u) du \right| = O(1) \left| \sum_{k=1}^{n} a_k d_k \log k \frac{1}{k}\right| \\
		&& = O(1) \left(\sum_{k=1}^{n} a_k^2\right)^{1/2} \left(\sum_{k=1}^{n} d_k^2 \log^2 k \frac{1}{k^2}\right)^{1/2} \\
		&& =O(1) \left(\sum_{k=1}^{n} \frac{k}{\log^4 (k+1)} \frac{\log^2 k}{k^2}\right)^{1/2} =O(1).
	\end{eqnarray*}

Hence, \eqref{**} is proved so the proof is complete.
 
\end{proof}

\textbf{Remark:} Consequently, the functionals defined by \eqref{equation2} are bounded e.g. when $\{\varphi_{n}\}$ is the trigonometric or Walsh system. \\

\textbf{Final remark.} We pronounce that some other convergence/divergence result  of one-dimensional Vilinkin-Fourier series can be found in the new book \cite{PTW1}. We hope that our (Functional) approach can be used to contribute to solving some of the open questions raised in this book. In this connection  we also mention the new paper \cite{PSTW}, which is related to the famous Carleson paper \cite{Ca}.

\end{document}